\newtheorem{proposition}{Proposition}[section]
\newtheorem{theorem}[proposition]{Theorem}
\newtheorem*{theorem*}{Theorem}
\newtheorem{corollary}[proposition]{Corollary}
\theoremstyle{definition}
\newtheorem{remark}[proposition]{Remark}
\numberwithin{equation}{section}
\newcommand{\tb}{\mathrm{tb}}
\newcommand{\rot}{\mathrm{rot}}
\begin{document}
\title[Topologically slice links with arbitrarily large smooth slice genus]{A family of non-split topologically slice links with arbitrarily large smooth slice genus}

\author{JungHwan Park$^\dag$}
\address{Department of Mathematics, Rice University MS-136\\
6100 Main St. P.O. Box 1892\\
Houston, TX 77251-1892}
\email{jp35@rice.edu}
\urladdr{http://math.rice.edu/$\sim$jp35}
\author{Arunima Ray$^{\dag\dag}$}
\address{Department of Mathematics MS-050, Brandeis University, 415 South St., Waltham, MA 02453}
\email{aruray@brandeis.edu}
\urladdr{http://people.brandeis.edu/$\sim$aruray}
\date{\today}
\thanks{$^\dag$ Partially supported by National Science Foundation grant DMS-1309081.}
\thanks{$^{\dag\dag}$ Partially supported by an AMS--Simons Travel Grant.}

\subjclass[2000]{57M25}

\begin{abstract}
We construct an infinite family of topologically slice 2--component boundary links $\ell_i$, none of which is smoothly concordant to a split link, such that $g_4(\ell_i)=i$.
\end{abstract}

\maketitle

%=============================================================
\section{Introduction}\label{Introduction}
A $k$--component link $L$ is the isotopy class of an embedding $\bigsqcup_k S^1\rightarrow S^3$ and a knot is simply a $1$--component link. A link is said to be smoothly slice if its components bound a disjoint collection of smoothly embedded disks in $B^4$; if there exists such a disjoint collection of merely locally flat disks we say that the link is topologically slice. The study of smoothly and topologically slice links is closely connected with the study of smooth and topological $4$--manifolds; e.g.\ any knot which is topologically slice but not smoothly slice~\cite{En95,Gom86, HeK12, HeLivRu12, Hom14}) gives rise to an exotic copy of $\mathbb{R}^4$~\cite[Exercise 9.4.23]{GS99}.

In an approach to approximating sliceness of links, we may consider surfaces bounded by a link in $B^4$. The minimal genus of a smooth embedded connected oriented surface in $B^4$ with boundary a given link $L$ is said to be the smooth slice genus of $L$, whereas the minimal genus of such a locally flat surface is called the topological slice genus of $L$. We denote these by $g_4(L)$ and ${g_4}^{top}(L)$ respectively. Note that if a link is smoothly (resp.\ topologically) slice it has zero smooth (resp.\ topological) slice genus. The converse is not true; e.g.\ the Hopf link (with either orientation) has smooth and topological slice genus zero, but is neither smoothly nor topologically slice. (Since slice surfaces must be oriented, the slice genus of a link depends on the relative orientation of the link components in general.) It is easy to see that the smooth (resp.\ topological) slice genus is an invariant of smooth (resp.\ topological) concordance of links. 

For any link $L$ we see that  ${g_4}^{top}(L) \leq g_4(L)$, since any smooth embedding of a surface is locally flat. Understanding the extent to which these two quantities are different can be seen as refining the question of when topologically slice knots may be smoothly non-slice. In particular, we focus on the following natural questions. 
\begin{itemize}
\item Are there examples of links which satisfy ${g_4}^{top}(L) < g_4(L)$?
\item Can the difference between $g_4(\cdot)$ and ${g_4}^{top}(\cdot)$ be arbitrarily large?
\end{itemize}
The above have been studied extensively for knots (see \cite{Don83, CG88, Tan98, FM15}). Here we will focus on 2--component links, for which we show that the answer to both questions is yes. 

\begin{theorem}\label{thm_main} For any integer $i \geq 0$, there exists a 2--component link $\ell_i$ such that\begin{enumerate}
\item $g_4(\ell_i)=i$ (consequently, the links $\ell_i$ are distinct in smooth concordance),
\item $\ell_i$ is not smoothly concordant to a split link,
\item $\ell_i$ is a boundary link,
\item $\ell_i$ is topologically slice (in particular, ${g_4}^{top}(\ell_i)=0$).
\end{enumerate}
\end{theorem}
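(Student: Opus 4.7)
My plan is to build $\ell_i$ by combining two ingredients: a fixed topologically slice $2$--component boundary link $L_0$, with unknotted components, which bounds a smooth annulus in $B^4$ but is not smoothly concordant to a split link; and a topologically slice knot $K$ with $\tau(K)=g_4(K)=1$, for example the positive untwisted Whitehead double of the right-handed trefoil. I define $\ell_i$ to be the link obtained by performing a local connected sum of one component of $L_0$ with the $i$-fold connected sum $K^{\#i}$, inside a ball disjoint from a Seifert surface for the other component.

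With this definition, properties (2)--(4) transfer from $L_0$. The boundary link property is preserved because band-summing a Seifert surface of $K^{\#i}$ into the Seifert surface of the modified component does not disturb the Seifert surface of the other component. Topological sliceness follows from Freedman's theorem: $K^{\#i}$ has Alexander polynomial $1$ and thus bounds a locally flat slice disk, which glues along the connect-sum ball with the locally flat slice disks for $L_0$. The non-split property is inherited from $L_0$: if $\ell_i$ were smoothly concordant to a split link $\ell'$, then taking the connected sum of both sides with $-K^{\#i}$ along the same component (and using that $K^{\#i} \# (-K^{\#i})$ is smoothly slice) would yield a smooth concordance from $L_0$ to a split link.

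For property (1), the upper bound $g_4(\ell_i)\leq i$ is obtained by tubing a smooth genus-$i$ Seifert surface for $K^{\#i}$ into the smooth annulus bounding $L_0$. For the lower bound $g_4(\ell_i)\geq i$, I would apply a slice-genus-bounding link concordance invariant---an appropriate extension of the Ozsv\'ath--Szab\'o $\tau$-invariant or of Rasmussen's $s$-invariant to $2$-component links---that is additive under local connected sum with a knot and vanishes on $L_0$; since the invariant of $K^{\#i}$ equals $i$, the desired lower bound follows.

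The principal obstacle is the construction and analysis of $L_0$ itself. Milnor invariants of $L_0$ all vanish since $L_0$ is a boundary link, and any invariant of topological concordance vanishes since $L_0$ is topologically slice, so a smooth-specific obstruction is needed. My plan is to use $d$-invariants of the double branched cover, exploiting that the double branched cover of a split $2$-component link decomposes as a connected sum, which imposes strong constraints on its $d$-invariants. Exhibiting an explicit $L_0$ whose double branched cover has $d$-invariants violating these constraints, while simultaneously verifying the smooth-annulus bound and the Alexander polynomial $1$ hypothesis on each component required for topological sliceness, is where I expect the core technical work to lie.
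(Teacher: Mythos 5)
Your overall strategy --- fixing one topologically slice, genus-zero, non-split boundary link $L_0$ and then connect-summing $K^{\# i}$ into one component --- is viable, and in fact closely parallels the alternative construction sketched in the paper's closing remark (the paper itself proceeds differently, building $\ell_i$ as a single Legendrian satellite family $L_i(K)$). However, as written your argument has two genuine gaps. First, the existence of $L_0$ is not established: you defer it to ``core technical work'' via $d$--invariants of double branched covers, but that sketch does not work as stated. The connected-sum decomposition of the double branched cover obstructs a link from \emph{being} split, whereas you need an obstruction to being smoothly \emph{concordant} to a split link; branched covers are not concordance invariants, so one must additionally control the correction terms along the concordance, and this is precisely where the real work in Ruberman--Strle lies. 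The honest fix is to quote their result directly: the examples of \cite{RS13}, with the antiparallel orientation, are topologically slice boundary links that bound smooth annuli in $B^4$ and are not smoothly concordant to split links, which is exactly the $L_0$ you need (unknottedness of the components is not actually required anywhere in your reduction).

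Second, and more seriously for claim (1), your lower bound $g_4(\ell_i)\geq i$ rests on an unnamed ``extension of $\tau$ or $s$ to $2$--component links'' assumed to bound the slice genus, to be additive under connected sum with a knot along a component, and to vanish on $L_0$ --- none of which is verified, and the normalization is off. For Cavallo's invariant \cite{Cav15} the bound for a $2$--component link is $2\tau(L)-2\leq 2g_4(L)$, so additivity together with $\tau(L_0)=0$ would only give $g_4(\ell_i)\geq i-1$; you need $\tau(L_0)$ to take its maximal value $1$, which itself requires a computation (e.g.\ a sharp Bennequin-type estimate for a Legendrian representative of $L_0$), and the additivity of the link invariant under such component-wise connected sums also needs a proof or citation. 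The paper avoids all link-invariant machinery: it pins down $g_4(\ell_i)$ from above and below by explicit genus-one cobordisms between $\ell_i$ and auxiliary \emph{knots} $Q_i(K)$ and $Q_{i+1}(K)$, whose $\tau$ and $g_4$ are computed with the ordinary slice--Bennequin inequality applied to Legendrian satellite diagrams; the non-splitness in (2) is likewise reduced, via \cite[Lemma 2.1]{RS13}, to a $\tau$ computation for the $(i+1,1)$--cable $P_{i+1}(K)$. You could repair your lower bound in the same spirit, by banding the two components of your $\ell_i$ into a knot whose $\tau$ is computable, rather than appealing to a hypothetical link invariant.
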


Removing condition (2) makes the theorem trivial, since we can use the links $\ell_i=K_i\sqcup U$, where each $K_i$ is a topologically slice knot with $g_4(K_i)=i$, $U$ is the unknot, and $\sqcup$ indicates taking a split union. Moreover, examples satisfying (2-4) are already known by~\cite[Theorem B]{RS13}. We will show that our examples are distinct from those in smooth concordance in Proposition~\ref{prop:smoothly_distinct}.

\subsection*{Acknowledgements}The first author would like to thank his advisor Shelly Harvey for her guidance and helpful discussions. The second author also thinks Shelly is pretty cool. 

We are indebted to the anonymous referee for comments that led to substantially improved exposition. 

%=============================================================
\section{Preliminaries}\label{sec:preliminaries}

%\section{Legendrian knots and the slice--Bennequin inequality}

This section consists of a brief overview of Legendrian knots, limited to the material we need for our proof. For more precise definitions and details, we direct the reader to~\cite{Etn05}. 

Recall that the standard contact structure on $\mathbb{R}^3$ is given by the kernel of the $1$--form $dz - y\, dx$. Then the standard contact structure on $S^3$ is defined such that if one removes a single point from $S^3$ the resulting contact structure is contactomorphic to the standard contact structure on $\mathbb{R}^3$. An embedding $\mathcal{K}$ of a knot $K$ in $S^3$ is Legendrian if $\mathcal{K}$ is tangent to the 2--planes of the standard contact structure on $S^3$. Legendrian knots may be studied concretely using their front projections, i.e.\ since a knot is compact we may consider it to be in $\mathbb{R}^3\subseteq S^3$ and then use the projection onto the $xz$--plane. The middle and right panel of Figure~\ref{fig:leg_satellite} show front projections of two Legendrian knots. There are two classical invariants for Legendrian knots, the Thurston--Bennequin number, $\tb(\cdot)$, and the rotation number, $\rot(\cdot)$. Given a front projection $\Pi(\mathcal{K})$ of a Legendrian knot $\mathcal{K}$, we have the following formulae:
\begin{align}
\tb(\mathcal{K})&=\text{writhe}(\Pi(\mathcal{K}))-\frac{1}{2}\#\text{cusps}(\Pi(\mathcal{K}))\\
\rot(\mathcal{K})&=\frac{1}{2}\#\text{downward-moving cusps}(\Pi(\mathcal{K}))-\frac{1}{2}\#\text{upward-moving cusps}(\Pi(\mathcal{K}))
\end{align}

Our main tool in this paper is the slice--Bennequin inequality (see \cite{Rud95, Rud97, Etn05, AM97, LM98}), which says that for any Legendrian representative $\mathcal{K}$ of a knot $K$, $$\tb(\mathcal{K}) + |\rot(\mathcal{K})| \leq  2\tau(K) - 1 \leq 2g_4(K) -1$$
where $\tau(\cdot)$ is Ozv\'{a}th--Szab\'{o}'s concordance invariant from Heegaard--Floer homology~\cite{OzSz04}, and the first inequality is from~\cite{Plam04}. Recall that $\tau$ is additive under connected sum and insensitive to the orientation of a knot.

The standard contact structure on $S^1\times \mathbb{R}^2$ is also defined as the kernel of the $1$--form $dz-y\,dx$, where we identify $S^1\times \mathbb{R}^2$ with $\mathbb{R}^3$ modulo $(x, y, z) \sim (x+1, y, z)$. As before an embedding $\mathcal{P}$ of a knot $P$ in $S^1 \times \mathbb{R}^2$ (called a pattern) is Legendrian if $\mathcal{P}$ is  tangent to the $2$-planes of the standard contact structure on $S^1 \times \mathbb{R}^2$. As in $\mathbb{R}^3$, we have front projections on the $xz$--plane, where the $x$--direction is understood to be periodic. We will draw these front projections in $[0,1]\times\mathbb{R}^2$ as shown in the left panel of Figure~\ref{fig:leg_satellite}, where the dashed lines indicate that the boundary should be identified. Using such front projections, we compute the Thurston--Bennequin number and rotation number of Legendrian patterns using the same combinatorical formulae as for knots given above. The winding number, $w(\cdot)$, of a Legendrian pattern is the signed number of times it wraps around the longitude of $S^1\times \mathbb{R}^2$. 

\begin{figure}[t]
  \includegraphics[width=\textwidth]{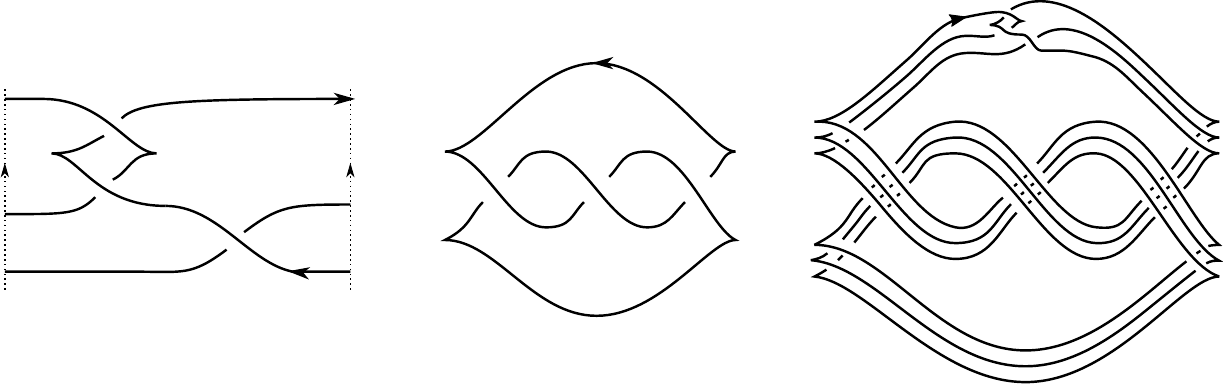}
  \put(-5.2,-0.3){$\mathcal{P}$}
  \put(-3.15,-0.3){$\mathcal{K}$}
  \put(-1.15,-0.3){$\mathcal{P}(\mathcal{K})$}
  \put(-5.5,-0.5){$\tb(\mathcal{P})=2$}
  \put(-5.55, -0.7){$\rot(\mathcal{P})=0$}
  \put(-5.47, -0.9){$w(\mathcal{P})=1$}
  \put(-3.5, -0.5){$\tb(\mathcal{K})=1$}
  \put(-3.55, -0.7){$\rot(\mathcal{K})=0$}
  \put(-1.5,-0.5){$\tb(\mathcal{P}(\mathcal{K}))=3$}
  \put(-1.55, -0.7){$\rot(\mathcal{P}(\mathcal{K}))=0$}
\caption{The Legendrian satellite operation}\label{fig:leg_satellite}
\end{figure}

Let $\mathcal{P}$ be a Legendrian pattern in $S^1 \times \mathbb{R}^2$ with $n$ end points, and $\mathcal{K}$ be a Legendrian knot. Then the Legendrian satellite operation yields a Legendrian knot $\mathcal{P}(\mathcal{K})$ by taking $n$ vertical parallel copies of $K$ and inserting $\mathcal{P}$ in an appropriately oriented strand of $\mathcal{K}$ (see Figure~\ref{fig:leg_satellite} for an example). It is easy to see that $\mathcal{P}(\mathcal{K})$ is a Legendrian diagram for the $\tb(\mathcal{K})$--twisted satellite of $K$. (For a detailed discussion of the Legendrian satellite operation see \cite{Ng01, NgTray04, Ray15}.) Hence when $\tb(\mathcal{K})=0$, $\mathcal{P}(\mathcal{K})$ represents the classical untwisted satellite with pattern $P$ and companion $K$ (see Figure~\ref{fig:leg_whitehead}). The following proposition establishes the relationship between the Thurston--Bennequin numbers and rotation numbers of a Legendrian pattern, a Legendrian knot, and the associated Legendrian satellite.

\begin{proposition}[Remark 2.4 of \cite{Ng01}]\label{prop:legformulas} For a Legendrian pattern $\mathcal{P}$ and a Legendrian knot $\mathcal{K}$,
$$\tb(\mathcal{P}(\mathcal{K}))=w(\mathcal{P})^2\tb(\mathcal{K})+\tb(\mathcal{P})$$ 
$$\rot(\mathcal{P}(\mathcal{K}))=w(\mathcal{P})\rot(\mathcal{K})+\rot(\mathcal{P}).$$
\end{proposition}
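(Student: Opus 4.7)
The plan is to give a direct combinatorial proof by computing the writhe and (signed) cusp count of the front projection of $\mathcal{P}(\mathcal{K})$ and then applying the formulas for $\tb$ and $\rot$ recalled above. The front projection of $\mathcal{P}(\mathcal{K})$ naturally decomposes into a \emph{cable region}, containing $n$ vertical parallel Legendrian copies of $\mathcal{K}$ (where $n$ is the number of endpoints of $\mathcal{P}$), and a \emph{pattern region} into which $\mathcal{P}$ is inserted. Each copy of $\mathcal{K}$ in the cable inherits an orientation from the pattern strand it is glued to; writing $n_+$ for the number of pattern strands traversing the region in the direction of $\mathcal{K}$ and $n_-$ for the number going oppositely, one has $n_++n_-=n$ and $n_+-n_-=w$ by the definition of the winding number.

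For the rotation number, each cusp of $\mathcal{K}$ appears $n$ times in $\mathcal{P}(\mathcal{K})$, once per copy. A downward-moving cusp of $\mathcal{K}$ becomes $n_+$ downward plus $n_-$ upward cusps of $\mathcal{P}(\mathcal{K})$; an upward-moving cusp gives the complementary count. Summing, and adding the contribution of $\mathcal{P}$ itself, yields $\rot(\mathcal{P}(\mathcal{K}))=w\rot(\mathcal{K})+\rot(\mathcal{P})$.

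For $\tb$, the total cusp count is $n\cdot\#\text{cusps}(\mathcal{K})+\#\text{cusps}(\mathcal{P})$, and the writhe equals $\text{writhe}(\mathcal{P})$ plus the contribution from the cable region. The cable writhe is the sum of the $n$ individual self-writhes, namely $n\,\text{writhe}(\mathcal{K})$, plus twice the sum of the pairwise linking numbers between copies. Two Legendrian parallel copies of $\mathcal{K}$ carrying orientations $\epsilon_i,\epsilon_j\in\{\pm 1\}$ have linking number $\epsilon_i\epsilon_j\tb(\mathcal{K})$, so $\sum_{i<j}\epsilon_i\epsilon_j\tb(\mathcal{K}) = \tfrac{1}{2}(w^2-n)\tb(\mathcal{K})$. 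Putting the pieces together,
\begin{align*}
\tb(\mathcal{P}(\mathcal{K})) &= n\,\text{writhe}(\mathcal{K})+(w^2-n)\tb(\mathcal{K})+\text{writhe}(\mathcal{P})-\tfrac{n}{2}\#\text{cusps}(\mathcal{K})-\tfrac{1}{2}\#\text{cusps}(\mathcal{P})\\
&= n\tb(\mathcal{K})+(w^2-n)\tb(\mathcal{K})+\tb(\mathcal{P}) = w^2\tb(\mathcal{K})+\tb(\mathcal{P}).
\end{align*}

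The delicate point is accounting for crossings between distinct parallel copies. A naive enumeration produces $n^2$ crossings at each crossing of $\mathcal{K}$, but this ignores the extra crossings introduced at each cusp of $\mathcal{K}$ when one takes a Legendrian parallel pushoff (these are exactly the crossings that make the pairwise linking number $\tb(\mathcal{K})$ rather than $\text{writhe}(\mathcal{K})$). Routing the computation through linking numbers, as above, absorbs this correction automatically and seems the cleanest way to avoid mistakes.
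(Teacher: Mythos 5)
Your argument is correct, but it is worth noting that the paper does not prove this proposition at all: it is quoted verbatim as Remark~2.4 of~\cite{Ng01}, so the ``paper's proof'' is a citation, whereas you supply a genuine self-contained derivation. Your route is the natural diagrammatic one: split the front of $\mathcal{P}(\mathcal{K})$ into the cable region (the $n$ vertical translates of the front of $\mathcal{K}$) and the pattern region, count cusps and crossings in each, and feed the totals into the combinatorial formulae for $\tb$ and $\rot$. The rotation-number count ($n_+-n_-=w$ copies of each cusp of $\mathcal{K}$, plus the cusps of $\mathcal{P}$) is exactly right, and your treatment of the $\tb$ formula correctly isolates the one delicate point: the inter-copy crossings in the cable are \emph{not} simply $n^2$ copies of each crossing of $\mathcal{K}$, because the vertical pushoffs acquire extra crossings near the cusps. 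Replacing that raw count by the statement that two Legendrian vertical pushoffs with orientations $\epsilon_i,\epsilon_j$ have linking number $\epsilon_i\epsilon_j\,\tb(\mathcal{K})$ (linking with the contact framing), together with $\sum_{i<j}\epsilon_i\epsilon_j=\tfrac12(w^2-n)$, absorbs the cusp correction cleanly and yields $w^2\tb(\mathcal{K})+\tb(\mathcal{P})$ as you compute; the only step you gloss over is that the signed inter-copy crossing count in the cable region equals $2\,lk$ of the honestly closed-up $n$-copy, which holds because the pattern is glued in along a crossingless band of parallel strands, so no inter-copy crossings are gained or lost there. In short: the paper buys brevity by outsourcing to~\cite{Ng01}; your proof buys self-containedness at the cost of the bookkeeping you have, in fact, done correctly, and it agrees with the sample computations in Figures~\ref{fig:leg_satellite} and~\ref{fig:leg_whitehead}.
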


%=============================================================
\section{Proof of main theorem}\label{sec:maintheorem}

For this section, we fix a Legendrian diagram $\mathcal{K}$ of a knot $K$ with the following properties:
\begin{enumerate}
\item $K$ is topologically slice.
\item $g_3(K) = g_4(K) = \tau(K) =1$.
\item $\tb(\mathcal{K}) = 0$.
\item $\rot(\mathcal{K}) = 2g_4(K) -1 = 1$.
\end{enumerate}

Examples of such knots can be easily found, as follows. Let $J$ be any knot with a Legendrian realization $\mathcal{J}$ satisfying $\tb(\mathcal{J})=0$ and $\tau(J)>0$, e.g.\ the right-handed trefoil. Any knot with positive maximal Thurston--Bennequin number has positive $\tau$ and such a Legendrian realization. Now perform the Legendrian satellite operation on $\mathcal{J}$ using the pattern for untwisted positive Whitehead doubling shown in Figure~\ref{fig:leg_whitehead}. We call the resulting Legendrian knot $\mathcal{K}$, which is a realization of the topological knot type $K$ (note that $K$ is the positive untwisted Whitehead double of $J$). We know that $K$ is topologically slice since it has Alexander polynomial one~\cite{Freed82}. Using Proposition~\ref{prop:legformulas}, we see that $\tb(\mathcal{K})=0$ and $\rot(\mathcal{K})=1$, and by~\cite{He07}, we see that $g_3(K)=g_4(K)=\tau(K)=1$.

\begin{figure}[b]
  \includegraphics[width=\textwidth]{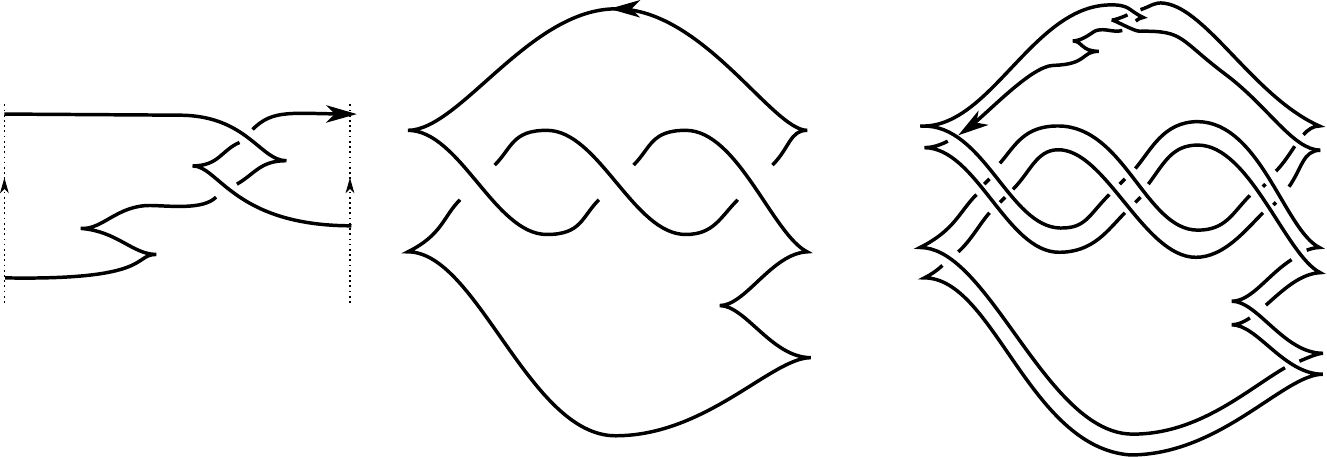}
  \put(-5.2,-0.3){$\mathcal{W}$}
  \put(-3.15,-0.3){$\mathcal{J}$}
  \put(-1.15,-0.3){$\mathcal{K}=\mathcal{W}(\mathcal{J})$}
  \put(-5.5,-0.5){$\tb(\mathcal{W})=0$}
  \put(-5.55, -0.7){$\rot(\mathcal{W})=1$}
  \put(-5.47, -0.9){$w(\mathcal{W})=0$}
  \put(-3.5, -0.5){$\tb(\mathcal{J})=0$}
  \put(-3.55, -0.7){$\rot(\mathcal{J})=1$}
  \put(-1.15,-0.5){$\tb(\mathcal{K})=0$}
  \put(-1.15, -0.7){$\rot(\mathcal{K})=1$}
\caption{Constructing the knots $\mathcal{K}$.}\label{fig:leg_whitehead}
\end{figure}

Since $\tb(\mathcal{K})=0$, from Section~\ref{sec:preliminaries}, we know that for any Legendrian diagram $\mathcal{P}$ for a pattern $P$, the Legendrian satellite $\mathcal{P}(\mathcal{K})$ is a Legendrian diagram for the untwisted satellite $P(K)$. 

\begin{figure}[b]
\centering
\includegraphics[width=5in]{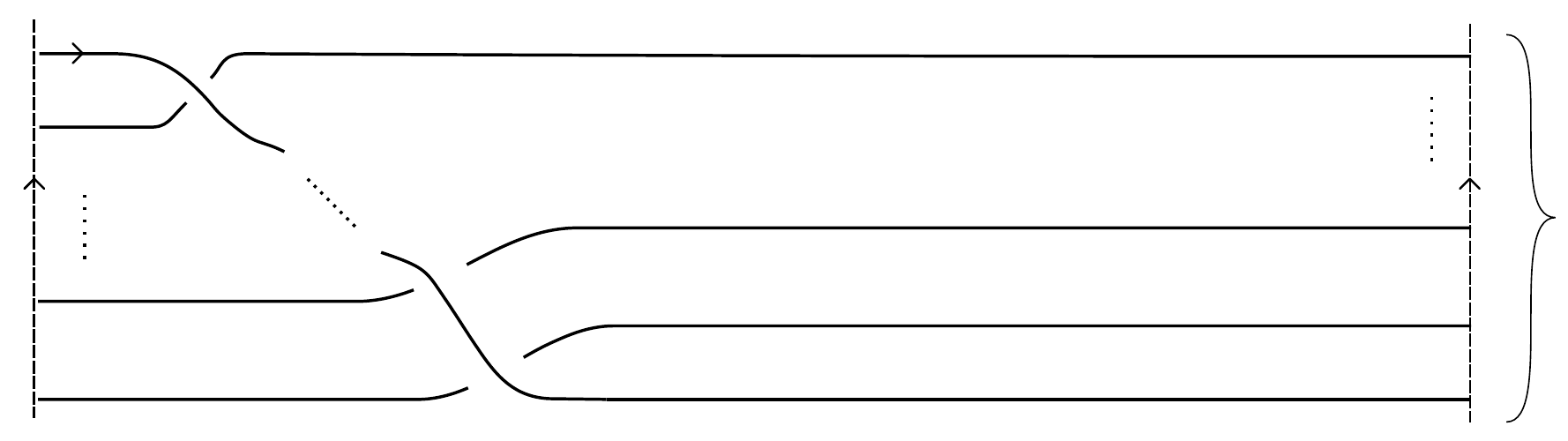}
\put(0,0.67){$i$ strands}
\caption{A Legendrian diagram $\mathcal{P}_i$ for the pattern $P_i$. We compute that $\tb(\mathcal{P}_i)=i-1, \rot(\mathcal{P}_i)=0$ and $w(\mathcal{P}_i)=i$.}\label{fig:Pi}
\end{figure}

We start with a few propositions. For any positive integer $i$, consider the Legendrian diagram $\mathcal{P}_i$ for a pattern $P_i$, given in Figure~\ref{fig:Pi}. Notice that the satellite knot $P_i(K)$ is the $(i,1)$ cable of $K$.

\begin{proposition}\label{prop:Pi}For the pattern $P_i$ and any integer $i \geq 1$, we have 
$$g_4(P_i(K))= \tau(P_i(K)) = i.$$
\end{proposition}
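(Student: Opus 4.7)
The plan is to sandwich $\tau(P_i(K))$ and $g_4(P_i(K))$ between the same lower and upper bounds, both equal to $i$. Throughout, I will use the chain of inequalities $\tau(P_i(K)) \leq g_4(P_i(K))$ from \cite{OzSz04}, so it suffices to establish $i \leq \tau(P_i(K))$ and $g_4(P_i(K)) \leq i$ separately.

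For the lower bound, I would apply the slice--Bennequin inequality to the Legendrian diagram $\mathcal{P}_i(\mathcal{K})$, which represents $P_i(K)$ because $\tb(\mathcal{K})=0$. Using Proposition~\ref{prop:legformulas} together with the values $\tb(\mathcal{P}_i)=i-1$, $\rot(\mathcal{P}_i)=0$, $w(\mathcal{P}_i)=i$, $\tb(\mathcal{K})=0$, and $\rot(\mathcal{K})=1$, I compute
\begin{align*}
\tb(\mathcal{P}_i(\mathcal{K})) &= w(\mathcal{P}_i)^2\tb(\mathcal{K})+\tb(\mathcal{P}_i) = i-1, \\
\rot(\mathcal{P}_i(\mathcal{K})) &= w(\mathcal{P}_i)\rot(\mathcal{K})+\rot(\mathcal{P}_i) = i.
\end{align*}
Plugging these into the slice--Bennequin inequality yields $(i-1)+i \leq 2\tau(P_i(K))-1$, i.e.\ $\tau(P_i(K)) \geq i$.

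For the upper bound, I would exhibit an explicit smooth genus-$i$ surface in $B^4$ bounded by $P_i(K)$. Start with a smooth genus-$1$ surface $\Sigma$ in $B^4$ realizing $g_4(K)=1$, and take $i$ parallel (disjoint) pushoffs of $\Sigma$; the boundary is an $i$-component link consisting of $i$ parallel copies of $K$, and the total Euler characteristic is $-i$. The $(i,1)$-cable $P_i(K)$ is obtained from this parallel link by $i-1$ band moves that successively join consecutive parallel strands, so attaching these $i-1$ bands in $B^4$ produces a connected surface with boundary $P_i(K)$ of Euler characteristic $-i-(i-1) = 1-2i$, hence genus exactly $i$. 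This gives $g_4(P_i(K)) \leq i$.

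Combining the two bounds with $\tau \leq g_4$ forces $\tau(P_i(K)) = g_4(P_i(K)) = i$. I do not expect any serious obstacle: the upper bound is a standard cabling construction, and the lower bound follows mechanically once the Legendrian invariants of $\mathcal{P}_i$ are read off from Figure~\ref{fig:Pi}. The only point requiring a little care is verifying that the $i-1$ band moves described above indeed convert the $i$-fold parallel of $K$ into the $(i,1)$-cable (and not into some other connecting pattern), which is a straightforward local picture near a longitudinal arc of $K$.
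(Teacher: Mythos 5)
Your proposal is correct and follows essentially the same route as the paper: the lower bound via Proposition~\ref{prop:legformulas} and the slice--Bennequin inequality applied to $\mathcal{P}_i(\mathcal{K})$, and the upper bound by banding together $i$ parallel copies of a genus-one surface for $K$ (your Euler characteristic count just makes explicit what the paper states tersely).
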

\begin{proof} Using Proposition~\ref{prop:legformulas}, we calculate:
$$\tb(\mathcal{P}_i(K))=w(\mathcal{P}_i)^2\tb(\mathcal{K})+\tb(\mathcal{P}_i)=i^2\cdot 0 + (i-1) = i - 1$$ 
$$\rot(\mathcal{P}_i(K))=w(\mathcal{P}_i)\rot(\mathcal{K})+\rot(\mathcal{P}_i)=i\cdot 1 + 0 = i.$$
Then by the slice--Bennequin inequality we have the following:
$$(i-1) + \lvert i\rvert = 2i -1 \leq 2\tau(P_i(K)) - 1 \leq 2g_4(P_i(K)) -1$$
and thus,
$$i \leq \tau(P_i(K)) \leq g_4(P_i(K)).$$
Note that we can change $P_i(K)$ into the $(i,0)$ cable of $K$ by performing $i-1$ band sums. Since $g_4(K)=1$ there is a surface $\Sigma$ in $B^4$ with $g(\Sigma)=1$ and $\partial \Sigma=K$, and we can take $i$ parallel copies of $\Sigma$ to get a genus $i$ surface smoothly embedded in $B^4$ bounded by $P_i(K)$. This shows that $g_4(P_i(K)) \leq i$. Combining this with the above, we conclude that $g_4(P_i(K))= \tau(P_i(K)) = i$.
\end{proof}

Note that we can also see that $\tau(P_i(K))=i$ by using Hom's formula from~\cite{Hom14}, since $P_i(K)$ is the $(i,1)$ cable of $K$ and, by~\cite{Hom14}, $\varepsilon(K)=1$. 

For any positive integer $i$, consider the Legendrian diagram $\mathcal{Q}_i$ for a pattern $Q_i$, shown in Figure~\ref{fig:Qi}. This pattern is similar to the one shown in \cite[Figure 9]{Ray15}, but $w(Q_i)=0$ whereas the pattern from \cite{Ray15} has winding number one. 

\begin{figure}[t]
\centering
\includegraphics[width=4in]{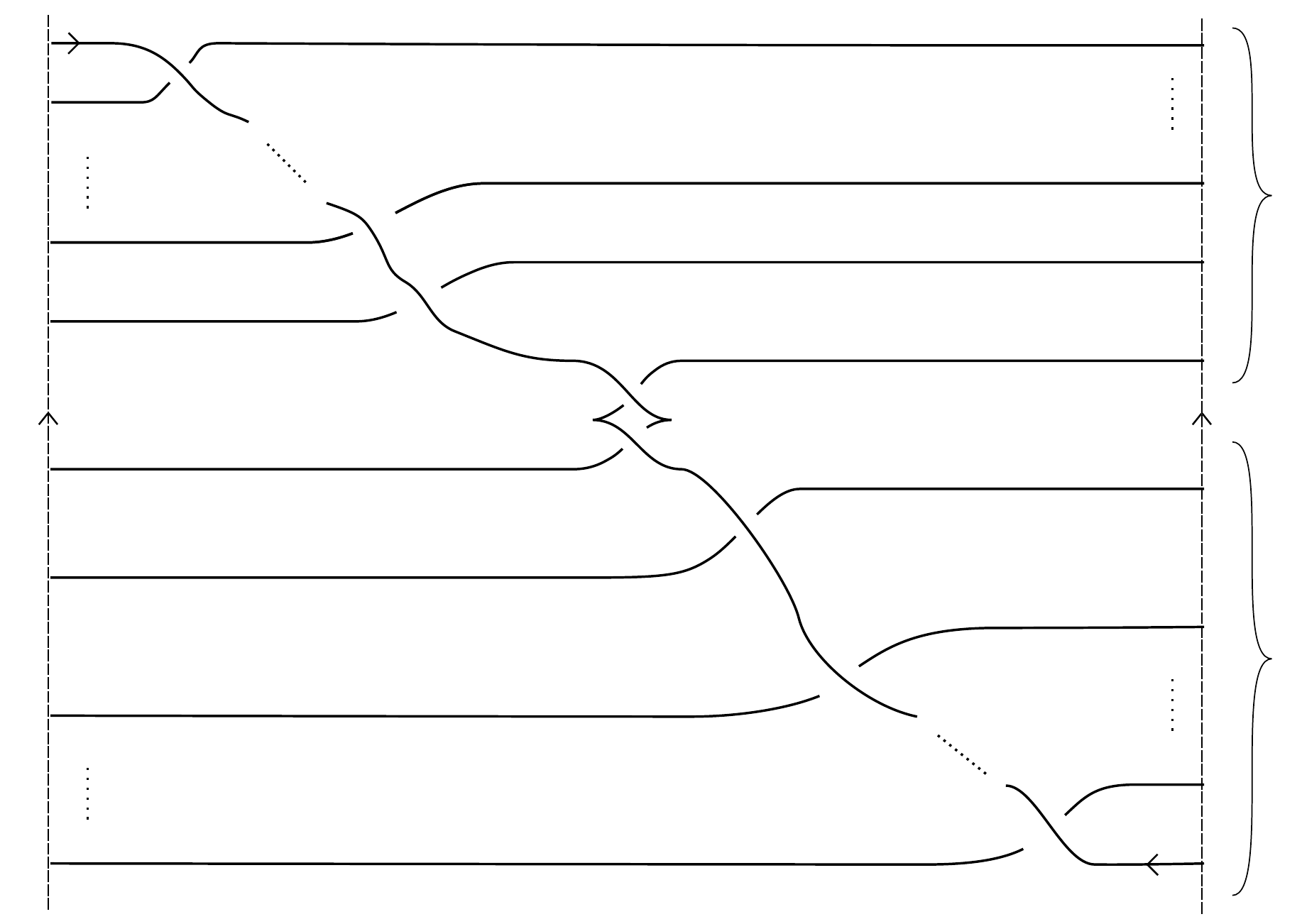}
\put(0,0.75){$i$ strands}
\put(0,2.2){$i$ strands}
\caption{A Legendrian diagram $\mathcal{Q}_i$ for the pattern $Q_i$. We compute that $\tb(\mathcal{Q}_i)=2i-1, \rot(\mathcal{Q}_i)=0$ and $w(\mathcal{Q}_i)=0$.}\label{fig:Qi}
\end{figure}

\begin{proposition}\label{prop:Qi} For the pattern $Q_i$ and any integer $i \geq 1$, we have 
$$g_4(Q_i(K))= \tau(Q_i(K)) = i.$$
\end{proposition}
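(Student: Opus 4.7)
The proof plan mirrors that of Proposition~\ref{prop:Pi}, but must be modified because $w(\mathcal{Q}_i)=0$. First, applying Proposition~\ref{prop:legformulas} with $\tb(\mathcal{K}) = 0$, $\rot(\mathcal{K}) = 1$, and the values $\tb(\mathcal{Q}_i) = 2i-1$, $\rot(\mathcal{Q}_i) = 0$, $w(\mathcal{Q}_i) = 0$ from Figure~\ref{fig:Qi} yields
\[
\tb(\mathcal{Q}_i(\mathcal{K})) = w(\mathcal{Q}_i)^2 \tb(\mathcal{K}) + \tb(\mathcal{Q}_i) = 2i-1 \quad \text{and} \quad \rot(\mathcal{Q}_i(\mathcal{K})) = 0.
\]
The slice--Bennequin inequality then gives $(2i-1) + 0 \leq 2\tau(Q_i(K)) - 1 \leq 2 g_4(Q_i(K)) - 1$, hence
\[
i \;\leq\; \tau(Q_i(K)) \;\leq\; g_4(Q_i(K)).
\]

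For the matching upper bound $g_4(Q_i(K)) \leq i$, one cannot simply take parallel copies of the genus-one surface bounded by $K$, since the naive count with $2i$ parallel copies would give genus $2i$. The plan is instead to exploit $w(\mathcal{Q}_i) = 0$ to obtain a genus-zero base surface. Since the pattern has winding number zero, $Q_i(K)$ contains $i$ strands running around $K$ with one orientation and $i$ with the opposite orientation. Pairing these into $i$ oppositely-oriented pairs, each pair cobounds a smoothly embedded annulus in $B^4$ (a parallel push-off of $K$ in its tubular neighborhood), so the link consisting of $i$ copies of $K$ together with $i$ copies of $-K$ bounds a disjoint union of $i$ annuli, a smooth surface in $B^4$ of total genus zero with $2i$ boundary components. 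From Figure~\ref{fig:Qi} one then reads off $2i-1$ bands providing a smooth cobordism from this $2i$-component link of parallel copies to $Q_i(K)$. Attaching these bands to the $i$ annuli reduces Euler characteristic by $2i-1$ and, choosing the bands so that the final surface is connected with one boundary component, gives
\[
\chi \;=\; 0 - (2i-1) \;=\; 1 - 2i,
\]
so the resulting surface has genus exactly $i$. This yields $g_4(Q_i(K)) \leq i$, which combined with the lower bound gives $g_4(Q_i(K)) = \tau(Q_i(K)) = i$.

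The main technical obstacle is verifying that the $2i-1$ bands read off from Figure~\ref{fig:Qi} really do connect the $i$ annuli into a single connected surface and realize $Q_i(K)$ as boundary; this is a direct combinatorial check on the diagram analogous to the $i-1$ band-sum argument used in Proposition~\ref{prop:Pi}. Once the bands are exhibited, the Euler-characteristic count automatically forces the genus to be $i$, and no separate tracking of ``between-component'' versus ``within-component'' band types is required beyond ensuring connectedness.
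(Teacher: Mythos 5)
Your lower bound is exactly the paper's: the same computation $\tb(\mathcal{Q}_i(\mathcal{K}))=2i-1$, $\rot(\mathcal{Q}_i(\mathcal{K}))=0$ and the slice--Bennequin inequality give $i\leq\tau(Q_i(K))\leq g_4(Q_i(K))$. For the upper bound you take a genuinely different route. The paper argues by induction: $Q_1(K)$ is the positive Whitehead double of $K$, so $g_4(Q_1(K))\leq g_3(Q_1(K))=1$, and changing a crossing at the clasp gives a genus-one cobordism from $Q_{i+1}(K)$ to $Q_i(K)$ (Figure~\ref{fig:cobordism}), whence $g_4(Q_i(K))\leq i$. You instead build a genus-$i$ surface directly: $i$ disjoint annuli bounded by the $i$ antiparallel pairs of $0$-framed parallel copies of $K$ (here the winding number zero hypothesis is used correctly), together with $2i-1$ fusion bands, and then an Euler characteristic count. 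This works, and your band count is right; the step you defer to ``reading off the diagram'' can be made precise from the structure of $Q_i$: one band undoes the clasp, turning $Q_i(K)$ into the two-component link whose components are the $(i,1)$ cables of the two antiparallel strands of the $(2,0)$ cable of $K$, and then $i-1$ bands on each component reduce it to the $(i,0)$ cable, exactly as in the proof of Proposition~\ref{prop:Pi}; this is $1+2(i-1)=2i-1$ bands in total, and your connectedness/Euler characteristic argument then forces genus $i$. (One small point to make explicit: the $i$ annuli can be chosen disjoint, e.g.\ as nested sub-annuli of the Seifert-framed annulus $K\times[0,1]$, which is why the antiparallel pairing is needed.) The paper's induction is shorter and leans on the figures for the crossing-change cobordism; your construction is more explicit about where the genus comes from and is essentially the winding-number-zero analogue of the parallel-copies argument used for $P_i$, at a level of diagrammatic rigor comparable to the paper's.
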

\begin{proof} Using Proposition~\ref{prop:legformulas}, we calculate:
$$\tb(\mathcal{Q}_i(K))=w(\mathcal{Q}_i)^2\tb(\mathcal{K})+\tb(\mathcal{Q}_i)=0^2\cdot 0 + (2i-1) = 2i - 1$$ 
$$\rot(\mathcal{Q}_i(K))=w(\mathcal{Q}_i)\rot(\mathcal{K})+\rot(\mathcal{Q}_i)=0\cdot 1 + 0 = 0.$$
Then by the slice--Bennequin inequality we have the following:
$$(2i-1) + |0| = 2i -1 \leq 2\tau(Q_i(K)) - 1 \leq 2g_4(Q_i(K)) -1$$
and thus,
\begin{equation}\label{eqn:inequality}
i \leq \tau(Q_i(K)) \leq g_4(Q_i(K)).
\end{equation}

Notice that $Q_1(K)$ is just the positive clasped Whitehead double of $K$ and thus $g_4(Q_1(K)) \leq g_3(Q_1(K)) = 1$. By~(\ref{eqn:inequality}), $1\leq g_4(Q_1(K))$ and thus, $g_4(Q_1(K))=1$. Additionally, there exists a genus one cobordism between $Q_i(K)$ and $Q_{i+1}(K)$ for $i\geq 1$, shown in Figure~\ref{fig:cobordism}, obtained by changing a crossing at the clasp in $Q_{i+1}(K)$. By induction, we see that $g_4(Q_i(K)) \leq i$, and combining this with~\ref{eqn:inequality}, we see that $g_4(Q_i(K))= \tau(Q_i(K)) = i$.
\end{proof}

\begin{figure}[h!]
\centering
\includegraphics[width=6in]{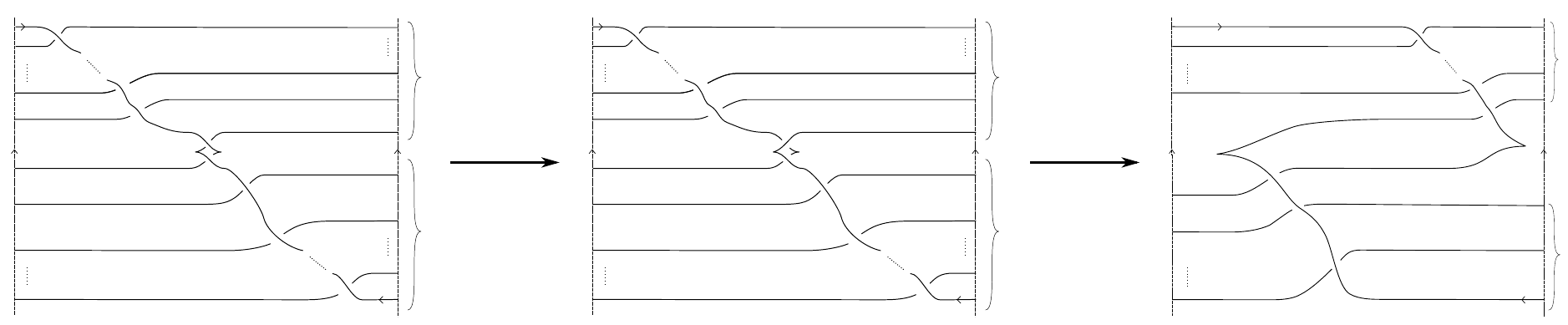}
\put(-5.4,-0.2){ $\mathcal{Q}_{i+1}$}
\put(-3.1,-0.2){ $Q_{i}$}
\put(-0.9,-0.2){ $\mathcal{Q}_{i}$}
\put(-4.4,0.88){ $i+1$}
\put(-4.4,0.3){ $i+1$}
\put(-2.2,0.88){ $i+1$}
\put(-2.2,0.3){ $i+1$}
\put(-0.05,0.94){ $i$}
\put(-0.05,0.22){ $i$}
\caption{A genus one cobordism from $Q_{i+1}$ to $Q_{i}$. Since the cobordism shown occurs in $S^1\times D^2$, this also shows a cobordism from $Q_{i+1}(K)$ to $Q_i(K)$. The first arrow is obtained by changing a crossing at the clasp. Notice that the second diagram is no longer Legendrian. The second arrow is obtained by an isotopy and results in the familiar diagram $\mathcal{Q}_i$.}\label{fig:cobordism}
\end{figure}

\begin{figure}[t]
\centering
\includegraphics[width=3in]{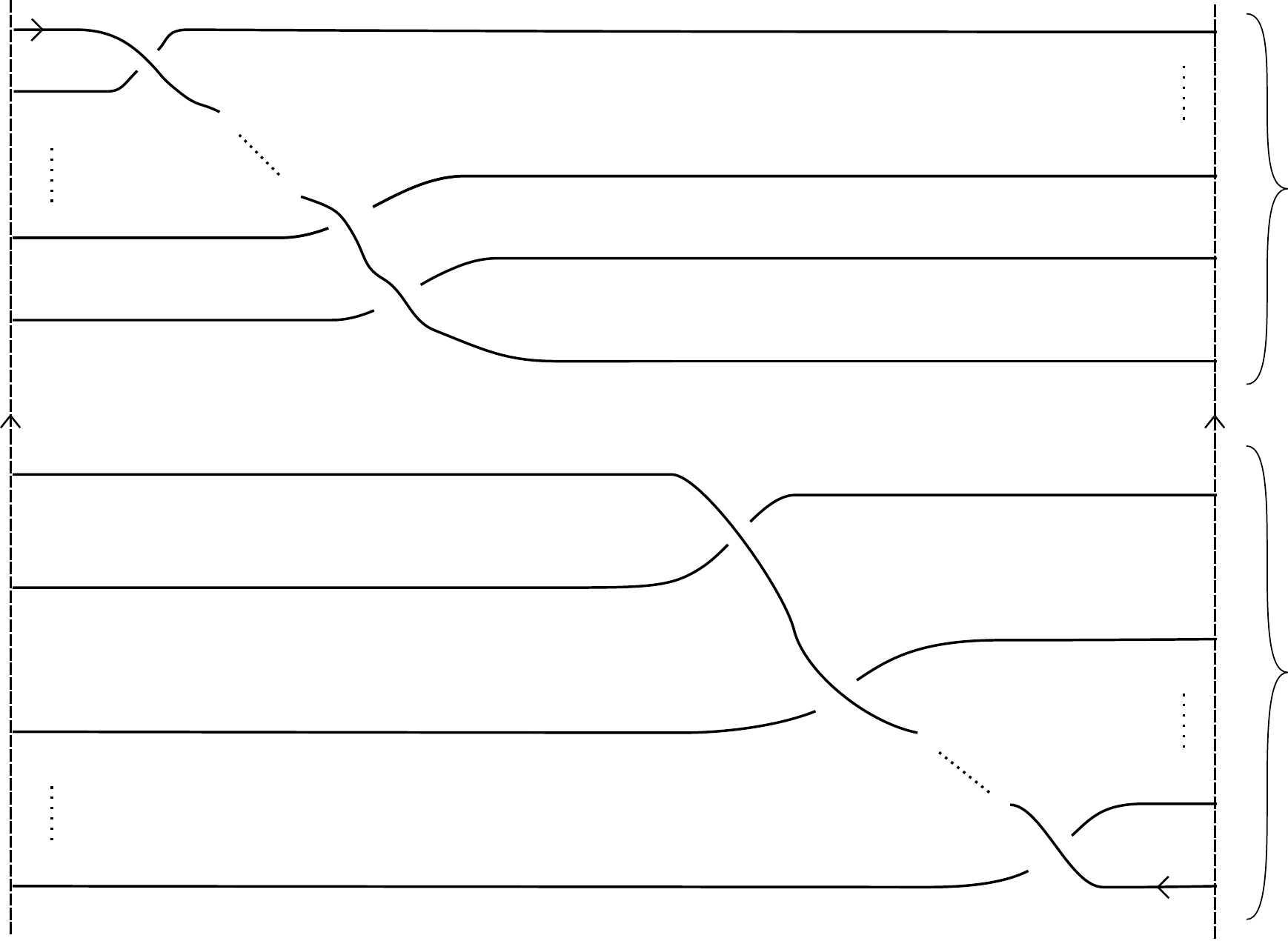}
\put(0,0.6){ $i+1$ strands}
\put(0,1.725){ $i+1$ strands}
\caption{A Legendrian diagram $\mathcal{L}_i$ for the pattern $L_i$. We compute that $\tb(\mathcal{L}_i)=2i, \rot(\mathcal{L}_i)=0$ and $w(\mathcal{L}_i)=0$.}\label{fig:Li}
\end{figure}

We are now ready to prove the main theorem, which we restate below. For each positive integer $i$, consider the pattern $L_i$ shown in Figure~\ref{fig:Li}. Notice that the link $L_i(K)$, if we ignore the orientation of the strands, is obtained by performing the $(i+1,1)$ cabling operation on each component of the $(2,0)$ cable of $K$. 

\newtheorem*{thm_main}{Theorem~\ref{thm_main}}
\begin{thm_main} For any integer $i\geq 0$, there exists a $2$--component link $\ell_i$ such that
\begin{enumerate}
\item $g_4(\ell_i)=i$ (consequently, the links $\ell_i$ are distinct in smooth concordance),
\item $\ell_i$ is not smoothly concordant to a split link.
\item $\ell_i$ is a boundary link.
\item $\ell_i$ is topologically slice (in particular, ${g_4}^{top}(\ell_i)=0$.)
\end{enumerate}
\end{thm_main}

\begin{proof} For any integer $i\geq 0$, let $\ell_i$ denote the 2--component link $L_i(K)$. We first show $g_4(L_i(K))=i$. When $i=0$, if we disregard orientation, $L_0(K)$ is simply the $(2,0)$ cable of $K$. Since the components of $L_0(K)$ has opposite orientation, they cobound an annulus which implies that $g_4(L_0(K))=0$. For $i \geq 1$, notice that there is a cobordism from $Q_{i+1}(K)$ to $L_{i}(K)$ and a cobordism from $L_{i}(K)$ to $Q_{i}(K)$ (see Figure~\ref{fig:cobordism2}). By the first cobordism and Proposition~\ref{prop:Qi}, we have $i+1=g_4(Q_{i+1}(K)) \leq g_4(L_{i}(K))+1$ and by the second cobordism and Proposition~\ref{prop:Qi}, we have $g_4(L_{i}(K)) \leq g_4(Q_{i}(K))=i$. Hence we can conclude $g_4(L_i(K))=i$. 

\begin{figure}[t]
\centering
\includegraphics[width=6in]{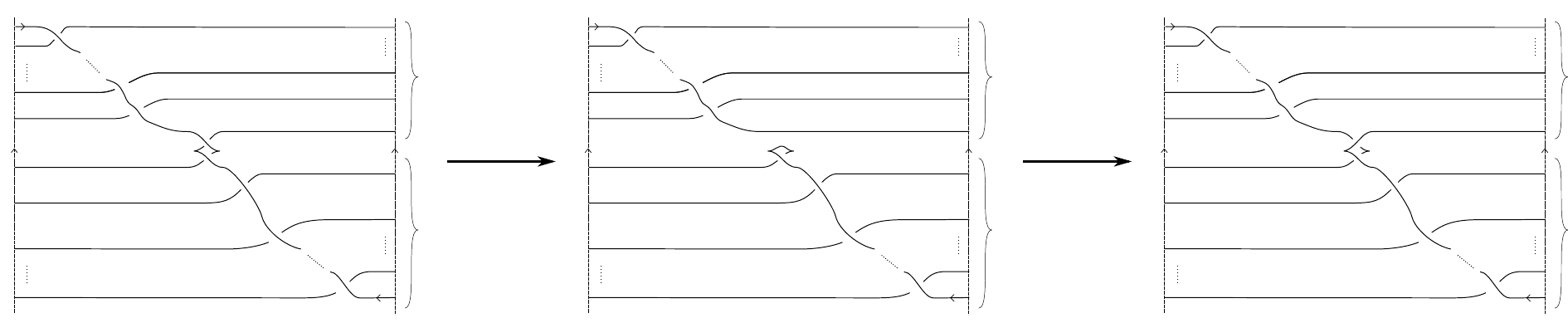}
\put(-5.4,-0.2){ $\mathcal{Q}_{i+1}$}
\put(-3.1,-0.2){ $L_{i}$}
\put(-0.9,-0.2){ $Q_{i}$}
\put(-4.4,0.88){ $i+1$}
\put(-4.4,0.3){ $i+1$}
\put(-2.2,0.88){ $i+1$}
\put(-2.2,0.3){ $i+1$}
\put(-0,0.88){ $i+1$}
\put(-0,0.3){ $i+1$}
\caption{The first arrow indicates a cobordism between $\mathcal{Q}_{i+1}(K)$ and $L_{i}(K)$ and the second arrow indicates a cobordism between $L_{i}(K)$ and $Q_{i}(K)$. Note that the right panel is the middle panel of Figure~\ref{fig:cobordism}}\label{fig:cobordism2}
\end{figure}

%It was shown in~\cite[Theorem A]{RS13} that if the $(2,0)$ cable of $K$ is smoothly concordant to a split link then $\tau(K)=0$. Thus, since we chose $K$ to have $\tau(K)=1$, $L_0(K)$ is not smoothly concordant to a split link. 
For $i \geq 0$, assume that $L_i(K)$ is smoothly concordant to a split link. Then it was observed in \cite[Lemma $2.1$]{RS13} that $L_i(K)$ is smoothly concordant to $K_{(i+1,1)} \sqcup r(K_{i+1,1})$ where $K_{i+1,1}$ is the $(i+1,1)$ cable of $K$, $r(K_{i+1,1})$ is $K_{i+1,1}$ with reversed orientation, and $\sqcup$ indicates a split union. Using this observation, we see that $g_4(K_{i+1,1} \sqcup r(K_{i+1,1})) = g_4(L_i(K))= i$ and thus, $g_4(K_{i+1,1} \# r(K_{i+1,1}))=i$ (see \cite[Proposition $3.3$]{CH14}). This is a contradiction since, $\tau(K_{i+1,1} \# r(K_{i+1,1})) = \tau(K_{i+1,1}) + \tau(r(K_{i+1,1})) = 2\tau(K_{i+1,1}) = 2\tau(P_{i+1}(K))$ and by Proposition~\ref{prop:Pi}, $\tau(P_{i+1}(K))= i+1$.

It is straightforward to see that $L_i(K)$ is a boundary link by construction: use parallel copies of a Seifert surface for $K$. Lastly $L_i(K)$ is topologically slice since $K$ is topologically slice.
\end{proof}

\begin{proposition}\label{prop:smoothly_distinct}
The examples $\ell_i$ from Theorem~\ref{thm_main} are distinct in smooth concordance from the examples given in~\cite[Theorem B]{RS13}.
\end{proposition}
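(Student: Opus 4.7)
The plan is to distinguish the family $\{\ell_i\}$ from the examples in~\cite[Theorem~B]{RS13} using the smooth slice genus $g_4$, which is a smooth concordance invariant, together with component-wise concordance information when needed.

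First I would recall the construction of~\cite[Theorem~B]{RS13} and verify that each of those links bounds a smoothly embedded connected oriented genus-zero surface in $B^4$; that is, each has smooth slice genus zero. For the Bing-double-style satellite construction used there, such a surface can be written down directly from the description as a pair of obvious disks joined by a tube. Combining this with Theorem~\ref{thm_main}(1), which gives $g_4(\ell_i)=i$, every $\ell_i$ with $i\geq 1$ is immediately distinguished in smooth concordance from each~\cite[Theorem~B]{RS13} example, since smoothly concordant links have equal smooth slice genus.

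For the remaining case $i=0$, both $\ell_0$ and the links of~\cite[Theorem~B]{RS13} have $g_4=0$, so I would compare individual components. The components of $\ell_0=L_0(K)$ are two oppositely oriented parallel copies of $K$, and $K$ was chosen in Section~\ref{sec:maintheorem} with $\tau(K)=1$, so in particular neither component is smoothly slice. By contrast, in the construction of~\cite[Theorem~B]{RS13} each component is smoothly unknotted, hence smoothly slice. Since the components of smoothly concordant links are themselves smoothly concordant (by restricting the concordance), $\ell_0$ is not smoothly concordant to any~\cite[Theorem~B]{RS13} example.

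The main obstacle will be pinning down the exact form of the links of~\cite[Theorem~B]{RS13} and confirming both the genus-zero bound and the component-triviality claim in that specific setting; once those two facts are in hand, the distinction follows directly from the invariants already established in the paper.
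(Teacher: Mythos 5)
Your proposal misidentifies the Ruberman--Strle examples, and this breaks both halves of the argument. The links of \cite[Theorem B]{RS13} are not Bing doubles: they are the $(2,0)$ cables (2-parallels) of Whitehead doubles $Wh(J_i)$; the Bing-double examples are Livingston's, which the paper treats separately (and only separates from the $\ell_i$ for $i\geq 2$). Two consequences. First, the components of the Ruberman--Strle links are Whitehead doubles of knots with $\tau>0$, hence neither unknotted nor smoothly slice, so your $i=0$ step (``each component is smoothly unknotted'') rests on a false premise; indeed $\ell_0$ is itself an antiparallel $(2,0)$ cable of such a Whitehead double, so component-wise information cannot separate it from that family. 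Second, and more seriously, the claim that every \cite{RS13} example has $g_4=0$ is only correct for the antiparallel orientation, where the two components cobound an annulus (not a pair of disks joined by a tube---the components bound no disks); with the parallel orientation the slice genus is positive, and the paper explicitly remarks that for these examples the slice genera of the two orientations can differ by an arbitrarily large amount. Since the orientation of the Ruberman--Strle cables is not pinned down, comparing $g_4$ of the given orientations alone does not rule out a concordance, e.g.\ a parallel-oriented cable could a priori have slice genus exactly $i$.

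The paper's proof addresses exactly this point by using both relative orientations of $\ell_i$. If $\ell_i$ were smoothly concordant to a $(2,0)$ cable with some orientation, then after possibly reversing one component it would be concordant to the antiparallel cable, so either $\ell_i$ or $\ell'_i$ (the link with one component reversed) would have slice genus zero. But $g_4(\ell_i)=i$, and attaching a single band to $\ell'_i$ gives a genus zero cobordism to $P_{2i+2}(K)$, whence $g_4(\ell'_i)\geq 2i+1>0$ by Proposition~\ref{prop:Pi}, a contradiction for $i\geq 1$. Your outline is missing this reversed-orientation estimate (or any substitute handling the parallel orientation), and your $i=0$ argument does not work as stated; to repair the proposal you would need both the correct description of the \cite{RS13} links and the bound on $g_4(\ell'_i)$.
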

\begin{proof}
The examples in~\cite[Theorem B]{RS13} consist of the $(2,0)$ cables, with either the parallel or antiparallel orientation, of a family of knots $\{Wh(J_i)\}$, where $J_i$ is either the connected sum of $i$ copies of the right-handed trefoil, or the torus knot $T_{2, 2i+1}$. It is easy to see from~\cite[Corollary 3.2]{RS13} that their argument also applies for $(2,0)$ cables of the connected sum of $i$ copies of the Whitehead double of the right-handed trefoil knot. We will show that our examples are distinct from these cables in smooth concordance. Since the Ruberman--Strle examples are (2,0) cables, we may choose the antiparallel orientation of the two strands; with this orientation, the smooth slice genus of the link is zero. For our examples, we saw in Theorem~\ref{thm_main}, that $g_4(\ell_i)=i$. Let $\ell'_i$ denote the link where we switch the orientation of one component. Then we may attach a single band to see a genus zero cobordism between $\ell'_i$ and $P_{2i+2}(K)$ (or its reverse). Then by Proposition~\ref{prop:Pi}, $g_4(\ell'_i)\geq 2i+1$. On the other hand, if the link $\ell_i$ were concordant to a (2,0) cable with some orientation, either $\ell_i$ or $\ell'_i$ would have zero slice genus.  

In~\cite{RS13}, we also see some examples due to Livingston consisting of Bing doubles of certain topologically slice knots. As before, we can choose an orientation for the Bing double such that there is a genus zero cobordism to the untwisted Whitehead double, and thus the slice genus of the link with this orientation is at most one. By our previous argument, our links $\ell_i$ are distinct in concordance from Livingston's examples as long as $i\geq 2$.
\end{proof}

Note that above we have shown that the difference between the smooth slice genus of 2--component topologically slice links with the two different relative orientations for the strands can be arbitrarily large. This is also true for the examples given in~\cite{RS13}. 

In~\cite{Cav15}, Cavallo introduced a generalization of Ozv\'{a}th--Szab\'{o}'s concordance invariant $\tau$ for links. He established the following inequality (see~\cite[Propositions 1.4 and 1.5]{Cav15}):
$$ \tb(\mathcal{L}) + \lvert \rot(\mathcal{L})\rvert \leq  2\tau(L) - 2 \leq 2g_4(L)$$ for any Legendrian diagram $\mathcal{L}$ for a 2--component link $L$. If we apply this inequality to $\ell_i$, using Proposition~\ref{prop:legformulas} and the diagram in Figure~\ref{fig:Li}, we get the following: $$ 2i + |0| \leq  2\tau(\ell_i) - 2 \leq 2i.$$ Then we see that $\tau(\ell_i)=i+1$ and the inequality is sharp for $\ell_i$. This establishes the following corollary.

\begin{corollary}Cavallo's $\tau$--invariant can be arbitrarily large for non-split topologically slice 2--component links. 
\end{corollary}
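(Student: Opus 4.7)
The plan is essentially to package the calculation already sketched in the paragraph immediately preceding the corollary. By Theorem~\ref{thm_main}, for each $i\geq 1$ the link $\ell_i$ is a non-split, topologically slice 2--component link with $g_4(\ell_i)=i$. So it suffices to exhibit Legendrian representatives whose classical invariants, via Cavallo's inequality, force Cavallo's $\tau$--invariant of $\ell_i$ to grow without bound in $i$.

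For the exhibition I would take the Legendrian diagram $\mathcal{L}_i(\mathcal{K})$ obtained by the Legendrian satellite operation, which represents $\ell_i=L_i(K)$. Since $w(\mathcal{L}_i)=0$, $\tb(\mathcal{L}_i)=2i$, $\rot(\mathcal{L}_i)=0$, and $\tb(\mathcal{K})=0$, $\rot(\mathcal{K})=1$, Proposition~\ref{prop:legformulas} (applied componentwise to the link pattern) yields $\tb(\mathcal{L}_i(\mathcal{K}))=2i$ and $\rot(\mathcal{L}_i(\mathcal{K}))=0$. Substituting into Cavallo's inequality
\[
\tb(\mathcal{L}_i(\mathcal{K}))+|\rot(\mathcal{L}_i(\mathcal{K}))|\;\leq\;2\tau(\ell_i)-2\;\leq\;2g_4(\ell_i),
\]
together with $g_4(\ell_i)=i$ from Theorem~\ref{thm_main}(1), pins the middle term between $2i$ and $2i$. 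Hence $\tau(\ell_i)=i+1$, which tends to infinity with $i$. Combined with Theorem~\ref{thm_main}(2) and~(4), this proves the corollary.

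I do not anticipate any genuine obstacle. The only minor thing to verify is that Proposition~\ref{prop:legformulas} (stated for Legendrian knot satellites) extends to the Legendrian satellite of a \emph{link} pattern along a knot companion; this is immediate because the relevant Legendrian invariants are defined by the same writhe and cusp counts on the front projection, and because $w(\mathcal{L}_i)=0$ causes the companion's contribution to drop out entirely. So the corollary amounts to a clean assembly of Theorem~\ref{thm_main}, the Legendrian satellite formulae, and Cavallo's inequality.
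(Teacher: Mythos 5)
Your proposal is correct and is essentially the paper's own argument: apply Cavallo's inequality to the Legendrian satellite diagram $\mathcal{L}_i(\mathcal{K})$, use Proposition~\ref{prop:legformulas} with $\tb(\mathcal{L}_i)=2i$, $\rot(\mathcal{L}_i)=0$, $w(\mathcal{L}_i)=0$, and $g_4(\ell_i)=i$ to conclude $\tau(\ell_i)=i+1$, which is unbounded. The only cosmetic difference is that you flag the extension of the satellite formulas to a link pattern, which the paper uses implicitly.
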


\begin{remark}An anonymous referee suggested the following slightly different approach to the proof of the main theorem of this paper. Let $J$ be the positive untwisted Whitehead double of the right handed trefoil. Start with the (2,0) cable of $J$, with antiparallel strands, and performing a connect-sum locally with $\#_n J$. As in our proof, we can find cobordisms to knots with known slice genera to conclude that the slice genus of the link is $n$. These links also satisfy the requirements of Theorem~\ref{thm_main}.
\end{remark}
%=============================================================

\bibliographystyle{alpha}
\bibliography{knotbib}

\end{document}